\documentclass[12pt]{amsart}
\usepackage{amsmath,amssymb,amsbsy,amsfonts,latexsym,amsopn,amstext,
                                               amsxtra,euscript,amscd}
\usepackage{url}
\usepackage[colorlinks,linkcolor=blue,anchorcolor=blue,citecolor=blue]{hyperref}
\usepackage{color}
\usepackage[english]{babel}

\usepackage[centertags]{amsmath}
\usepackage{amsfonts,amssymb}
\usepackage{graphicx}
\usepackage{enumerate}

\newtheorem{theorem}{Theorem}
\newtheorem{lemma}[theorem]{Lemma}

\def\({\left(}
\def\){\right)}


\numberwithin{equation}{section}
\numberwithin{theorem}{section}


\def\cI{{\mathcal I}}


\def\F{{\mathbb F}}

\def\mand{\qquad\mbox{and}\qquad}

\DeclareMathOperator{\ord}{ord}

\begin{document}

\title{An explicit polynomial analogue of Romanoff's theorem}
\author{Igor E. Shparlinski}
\author{Andreas J. Weingartner}

\address{Department of Pure Mathematics, University of New South Wales,
Sydney, NSW 2052, Australia}
\email{igor.shparlinski@unsw.edu.au}
\address{ 
Department of Mathematics,
 Southern Utah University,
Cedar City, UT 84720, USA}
\email{weingartner@suu.edu}
\date{\today}

\subjclass[2010]{11T06, 11T55}

\begin{abstract}
Given a polynomial $g$ of positive degree over a finite field,  
we show that the proportion of polynomials of degree $n$, 
which can be written as $h+g^k$, where $h$ is an irreducible polynomial of degree $n$ and $k$ is a nonnegative integer, 
has order of magnitude $1/\deg g$.
\end{abstract} 
\maketitle

\section{Introduction}

Given an integer $a\ge 2$, the celebrated result of
Romanoff~\cite{Romanoff} asserts  that a positive proportion of integers can be written 
in the form $p+a^k$, where $p$ is prime.
In the prominent case $a=2$, this has been made explicit by Pintz~\cite{Pintz}
who shows that this proportion is at least $0.09368$, which
improves estimates by several other authors~\cite{CS, HR, Lu}.

Lately, there has been a burst of activity in analytic number theory 
related to polynomials over finite fields, with a wide range of results modeling many important  theorems
and open conjectures for the integers; see~\cite{BB-S,BB-SF,BB-SR, B-S1, B-S2,
B-SSW, Ent, KR-G, KRR-GR, KeRu1, KeRu2, Pollack, Rud} 
and the references therein. In this area, monic irreducible polynomials play 
the role of prime numbers, and monic polynomials of degree $n$ 
over a finite field  correspond to integers
of approximate size $q^n$, where $q$ is the order of the field.

Motivated by this recent trend, we
give an explicit analogue of Romanoff's Theorem for polynomials over finite fields.
Let $\F_q$ be the finite field with $q$ elements. For $g\in \F_q[x]$, 
let $R(n,g,q)$ denote the number of monic polynomials $f\in \F_q[x]$ of degree $n$, 
which can be written in the form $f=h+g^k $, 
where $h$ is a monic irreducible polynomial of degree $n$ and $k$ is a nonnegative integer. 
Let 
$$
r(n,g,q)= \frac{R(n,g,q)}{q^n},
$$ 
the proportion of monic polynomials $f$ of degree $n$, which can be written this way.
Since there are close to $q^n/n$ choices for $h$, and about $n/\deg g$ choices for $k$,
one might expect $r(n,g,q)$ to be approximately of size $1/\deg g$. This is in fact the case.

\begin{theorem}\label{thm1}
Let $\gamma$ denote Euler's constant and $\delta= \deg g$. Uniformly for $g\in \F_q[x]$, $n\ge 1$, $\delta \ge 1$, $q\ge 2$, we have
\begin{equation}
\label{eq:ULB Expl}
 \frac{1+\delta/n}{\delta}  \ge r(n,g,q) >
\frac{(1-2q^{-n/2})^2 \(1+ \delta/n\)^{-1}}{\delta+8\, \frac{q}{q-1} \(1+e^\gamma 
\min\left\{ 5 \sqrt{\delta /q},\  \frac{\log 6\delta}{\log q}\right\}\)}.
\end{equation}
\end{theorem}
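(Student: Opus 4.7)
The upper bound in \eqref{eq:ULB Expl} follows from an elementary count: there are at most $\lfloor n/\delta\rfloor + 1\le n/\delta + 1$ nonnegative integers $k$ with $\deg g^k\le n$, and at most $q^n/n$ monic irreducible polynomials of degree $n$; since each pair $(h,k)$ yields at most one monic $f=h+g^k$ of degree $n$, one has $R(n,g,q)\le (q^n/n)(n/\delta+1)=q^n(1+\delta/n)/\delta$.

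For the lower bound, I would follow Romanoff's classical second-moment method, transferred to $\F_q[x]$. Let $r(f)$ count pairs $(h,k)$ with $h$ monic irreducible of degree $n$, $k\in K:=\{k\ge 0:k\delta<n\}$, and $f=h+g^k$; the strict inequality forces $f$ to be monic of degree $n$, and $|K|\ge n/\delta$. The Cauchy--Schwarz inequality gives
\[
R(n,g,q)\ \ge\ \frac{T^2}{Q},\qquad T:=\sum_f r(f),\quad Q:=\sum_f r(f)^2.
\]
By the prime polynomial theorem $|\pi_q(n)-q^n/n|\le 2q^{n/2}/n$ we obtain
\[
\frac{q^n}{\delta}(1-2q^{-n/2})\ \le\ T\ =\ |K|\cdot\pi_q(n)\ \le\ \frac{q^n}{\delta}(1+\delta/n);
\]
applying these two-sided bounds to the monotone estimate $T^2/Q\ge T_{\min}^2/(T_{\max}+Q_{\mathrm{off}})$ (after the splitting $Q=T+Q_{\mathrm{off}}$ and using $T\le T_{\max}$ in the denominator) accounts for the factors $(1-2q^{-n/2})^2$ and $(1+\delta/n)^{-1}$ of \eqref{eq:ULB Expl}.

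The main work goes into bounding the off-diagonal term $Q_{\mathrm{off}}$, which counts quadruples with $k_1\ne k_2$. Since $h_1+g^{k_1}=h_2+g^{k_2}$ is equivalent to $h_2=h_1+d$ with $d:=g^{k_1}-g^{k_2}$, this reduces $Q_{\mathrm{off}}$ to twice the sum over pairs $k_1>k_2$ in $K$ of the polynomial twin-prime count $N(d,n):=\#\{h\text{ monic irreducible of degree }n : h+d\text{ monic irreducible of degree }n\}$. I would invoke a polynomial Brun--Titchmarsh inequality
\[
N(d,n)\ \le\ \frac{8q}{q-1}\cdot\frac{q^n}{n^2}\prod_{P\mid d}\bigl(1+q^{-\deg P}\bigr),
\]
with $P$ ranging over monic irreducibles of $\F_q[x]$, which furnishes the constants $8$ and $q/(q-1)$ in the denominator of \eqref{eq:ULB Expl}. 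Writing $d=g^{k_2}(g^m-1)$ with $m=k_1-k_2>0$ and using $\gcd(g,g^m-1)=1$, the product factors as $\prod_{P\mid g}(1+q^{-\deg P})\cdot\prod_{P\mid g^m-1}(1+q^{-\deg P})$, and summing over $k_1>k_2$ reduces the task to an explicit bound on
\[
\Sigma\ :=\ \sum_{m=1}^{|K|-1}\ \prod_{P\mid g^m-1}\bigl(1+q^{-\deg P}\bigr).
\]

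The principal obstacle is estimating $\Sigma$ with fully explicit constants; this is where the factor $e^\gamma$ and the two competing terms inside the minimum appear. Expanding the product as a sum over squarefree monic divisors of $g^m-1$ gives
\[
\Sigma\ =\ \sum_{D\text{ sqfree}}\frac{\mu^2(D)}{q^{\deg D}}\cdot\#\{1\le m\le|K|-1:\,D\mid g^m-1\},
\]
in which the inner count is governed by the multiplicative order of $g$ in $(\F_q[x]/D\F_q[x])^*$. I expect to derive two independent bounds for $\Sigma$ and combine them via the minimum: a \emph{short} bound, effective when $\delta$ is small compared to $q$, coming from a divisor-sum estimate of the form $\sigma(g^m-1)/q^{m\delta}\le 1+O(\sqrt{m\delta/q})$, producing the $5\sqrt{\delta/q}$ term; and a \emph{long} bound obtained from the polynomial Mertens theorem
\[
\prod_{\deg P\le y}\bigl(1-q^{-\deg P}\bigr)^{-1}\ =\ e^\gamma\,y\log q + O(1),
\]
which furnishes the $e^\gamma\log(6\delta)/\log q$ term. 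Inserting the minimum of the two bounds into the estimate for $Q_{\mathrm{off}}$ and returning to Cauchy--Schwarz then produces the explicit denominator $\delta + 8\frac{q}{q-1}\bigl(1+e^\gamma\min\{5\sqrt{\delta/q},\log(6\delta)/\log q\}\bigr)$ of \eqref{eq:ULB Expl}.
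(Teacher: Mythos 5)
Your skeleton coincides with the paper's own proof: the elementary upper bound via $I_q(n)\le q^n/n$ and at most $1+\lfloor n/\delta\rfloor$ exponents; the Cauchy--Schwarz second moment with the diagonal/off-diagonal split; the explicit twin-irreducible bound (your ``Brun--Titchmarsh'' inequality is exactly Pollack's lemma combined with $\prod_{p\mid f}(1-|p|^{-2})^{-1}<q/(q-1)$, i.e.\ Lemma~\ref{lem-5}); the factorization $g^{k_1}-g^{k_2}=g^{k_2}(g^{m}-1)$; and the expansion over squarefree divisors $D$ governed by $\ord_D(g)$. Up to that point there is no divergence from the paper.

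The genuine gap is that the step which actually produces the explicit quantity $1+e^\gamma\min\{5\sqrt{\delta/q},\ \log(6\delta)/\log q\}$ is precisely the step you defer, and neither of the two tools you name would deliver it. The ``polynomial Mertens theorem'' you quote is misstated: $\prod_{\deg P\le y}(1-q^{-\deg P})^{-1}$ is asymptotically a $q$-dependent constant times $e^\gamma y$, not $e^\gamma y\log q+O(1)$; and in any case a Mertens-type input by itself does not explain why the final bound involves $\log\delta$ rather than $\log n$ --- that uniformity in $n$ is the whole difficulty. Your proposed ``short'' bound, a pointwise estimate of $E(g^m-1)$ of the shape $1+O(\sqrt{m\delta/q})$, abandons the order structure you had just set up: summed over $m\le n/\delta$ (even with the weights) it produces a term of size about $\sqrt{n/q}$, which grows with $n$ and cannot yield the $n$-free term $5\sqrt{\delta/q}$. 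The missing idea is the device of Murty--Rosen--Silverman used in the paper: every squarefree $f$ with $\ord_f(g)\le x$ divides $\prod_{\ell\le x}(g^\ell-1)$, a polynomial of degree $\delta(1+\lfloor x\rfloor\lfloor x+1\rfloor/2)\approx\delta x^2/2$, so the partial sums $H_g(x)=\sum_{\ord_f(g)\le x}\mu^2(f)/|f|$ are bounded by $E$ of that product; the extremal ``as many small irreducible factors as possible'' argument together with the explicit harmonic-number inequality $\exp(H_m)\le e+e^\gamma(m-1)$ gives $E(f)\le 1+e^\gamma\min\{\deg f/q,\ \log(\deg f)/\log q\}$ (Lemma~\ref{lem-3}), and Abel summation of $S(g)=\sum_{\gcd(f,g)=1}\mu^2(f)/(|f|\ord_f(g))$ against these bounds --- i.e.\ integrating $\min\{\delta x^2/q,\ \log(\delta x^2)/\log q\}\,x^{-2}$ --- is what yields both $1+e^\gamma\log(6\delta)/\log q$ and $1+5e^\gamma\sqrt{\delta/q}$ with numerical constants. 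Two further points: the theorem contains no $O(\cdot)$'s, so every constant in your plan must be made explicit; and if you drop the weights $n/\delta+1-m$ (your unweighted $\Sigma$, which in effect bounds the number of pairs at difference $m$ by $|K|$), you lose a factor of $2$ and end up with $16$ in place of $8$ in the denominator of~\eqref{eq:ULB Expl}.
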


We now present three straightforward implications of Theorem~\ref{thm1}, all of which hold
uniformly for $g\in \F_q[x]$, $n\ge \delta \ge 1$, $q\ge 2$.
First, we note that~\eqref{eq:ULB Expl} yields the estimate
\begin{equation}
\label{eq:simple1}
\frac{1+\delta/n}{\delta} \ge r(n,g,q) > \frac{\(1+ \delta/n\)^{-1}}{\delta}  \(1+O\(\frac{\log 2 \delta}{\delta}\)\),
\end{equation}
which shows that $r(n,g,q)\sim 1/\delta$ provided $\delta \to \infty$ and $\delta/n \to 0$. 
Another immediate consequence of~\eqref{eq:ULB Expl} is
\begin{equation}
\label{eq:simple3}
\frac{1+\delta/n}{\delta} \ge r(n,g,q) > \frac{\(1+ \delta/n\)^{-1}}{\delta+8}  \(1+O\(\frac{1}{\sqrt{q\delta}}\)\),
\end{equation}
which gives good bounds for $r(n,g,q)$ as soon as $q$ is large.
Finally, a few basic observations at the end of Section~\ref{sec:monicpolys} show that~\eqref{eq:ULB Expl} implies the simple explicit bounds
\begin{equation}
\label{eq:simple2}
 \frac{2}{\delta} \ge r(n,g,q) > \frac{0.01}{\delta} .
\end{equation}

The number $8$ in the estimates~\eqref{eq:ULB Expl} and~\eqref{eq:simple3} comes directly from the factor $8$ in Lemma~\ref{lem-pollack}, an explicit upper bound, due to Pollack~\cite{Pollack}, for the number of monic irreducible pairs with a given difference.
Any improvement of the  constant $8$ in Lemma~\ref{lem-pollack} 
would lead immediately to a corresponding improvement in~\eqref{eq:ULB Expl} and~\eqref{eq:simple3}.

The proof of Theorem~\ref{thm1} is modeled after the original paper by 
Romanoff~\cite{Romanoff}.
A central ingredient in Romanoff's proof is an upper bound for the series
\begin{equation}\label{eq:sum}
\sum_{\substack{n\ge 1\\\gcd(n,a)=1}} \frac{\mu^2(n)}{n \ord_n(a)},
\end{equation}
where $\ord_n(a)$ denotes the multiplicative order of $a$ modulo $n$, and $\mu$ is the M\"obius function. 
To obtain an explicit upper bound for the analogous series in the polynomial case,
we adapt the simpler strategy of Murty, Rosen and Silverman~\cite{MRS}, who give 
estimates for sums similar to~\eqref{eq:sum}, as well as analogous results over number fields and abelian varieties. 
Kuan~\cite{Kuan} further extends the results in~\cite{MRS} to Drinfeld modules.

Theorem~\ref{thm1} also applies to non-monic polynomials. Let $\widetilde{R}(n,g,q)$ denote the number of
(not necessarily monic) polynomials $f\in \F_q[x]$ of degree $n$, which can be written as $f=h+g^k$, 
where $h$ is an irreducible (not necessarily monic) polynomial of degree $n$ and $k$ is a nonnegative integer.  
We define 
$$\widetilde{r}(n,g,q)=\frac{\widetilde{R}(n,g,q)}{(q-1)q^n},
$$
which is
the proportion of polynomials $f$ of degree $n$, which can be written this way.

\begin{theorem}\label{thm2}
Theorem~\ref{thm1} remains valid if $r(n,g,q)$ is replaced by $\widetilde{r}(n,g,q)$.
\end{theorem}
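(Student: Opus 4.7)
The plan is to mirror the Cauchy--Schwarz argument used for Theorem~\ref{thm1}, noting that both the first and second moments of the representation-counting function scale by a factor of $q-1$ when passing from monic to arbitrary irreducibles. Let
\[
\widetilde N(f)=\bigl|\{(h,k):h\in\F_q[x]\text{ irreducible},\ \deg h=n,\ k\ge 0,\ f=h+g^k\}\bigr|,
\]
and write $\pi_n$ for the number of monic irreducibles of degree $n$, so that there are $(q-1)\pi_n$ irreducibles of degree $n$ in total and $\widetilde R(n,g,q)=|\{f:\deg f=n,\ \widetilde N(f)\ge 1\}|$.

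The upper bound in~\eqref{eq:ULB Expl} for $\widetilde r$ follows from
\[
\widetilde R(n,g,q)\le\sum_f\widetilde N(f)\le (q-1)\pi_n\rf{n/\delta}+(q-1)\pi_n\mathbf{1}_{\delta\mid n},
\]
together with $\pi_n\le q^n/n$: dividing by $(q-1)q^n$ yields $\widetilde r(n,g,q)\le(1+\delta/n)/\delta$. The first term counts pairs with $k\delta<n$ (each of the $(q-1)\pi_n$ irreducibles is admissible), and the second handles $k\delta=n$, which requires $\delta\mid n$ and excludes the single value of $\mathrm{lc}(h)$ that would collapse the degree.

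For the lower bound, Cauchy--Schwarz gives $\widetilde R(n,g,q)\ge(\sum_f\widetilde N(f))^2/\sum_f\widetilde N(f)^2$. The first moment satisfies $\sum_f\widetilde N(f)\ge(q-1)\pi_n\rf{n/\delta}$, a factor $q-1$ larger than the corresponding quantity $\sum_f N(f)$ used for Theorem~\ref{thm1}. For the second moment, $\sum_f\widetilde N(f)^2$ counts quadruples $(h_1,k_1,h_2,k_2)$ satisfying $h_1-h_2=g^{k_2}-g^{k_1}$. In the main range $\max(k_1,k_2)\delta<n$, the right-hand side has degree less than $n$, so $h_1$ and $h_2$ share a common leading coefficient $c\in\F_q^\ast$; writing $h_i=c\tilde h_i$ with $\tilde h_i$ monic, the equation becomes
\[
\tilde h_1-\tilde h_2=c^{-1}(g^{k_2}-g^{k_1}).
\]
Summing over $c\in\F_q^\ast$ and invoking Lemma~\ref{lem-pollack} bounds this contribution by $q-1$ times the Pollack-type estimate used for the monic second moment in Theorem~\ref{thm1}; the diagonal $k_1=k_2$ likewise contributes $(q-1)$ times the monic diagonal. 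Consequently,
\[
\widetilde R(n,g,q)\ge (q-1)\,\frac{(\sum_f N(f))^2}{\sum_f N(f)^2}\ge (q-1)\,R(n,g,q)_{\mathrm{LB}},
\]
where $R(n,g,q)_{\mathrm{LB}}$ denotes the explicit lower bound of Theorem~\ref{thm1}. Dividing by $(q-1)q^n$ recovers the same lower bound for $\widetilde r(n,g,q)$.

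The main obstacle is the boundary subcase $\max(k_1,k_2)\delta=n$ (possible only when $\delta\mid n$) in the second-moment estimate, where $h_1$ and $h_2$ necessarily have distinct leading coefficients satisfying $\mathrm{lc}(h_1)-\mathrm{lc}(h_2)=\mathrm{lc}(g)^{n/\delta}$; the clean substitution above breaks down, and one must count monic pairs satisfying the weighted relation $c_1\tilde h_1-c_2\tilde h_2=g^{k_2}-g^{k_1}$. A crude bound such as $(q-1)\pi_n^2$ is sufficient here, because the set of relevant $k$-pairs is small and the entire subcase contributes only a lower-order correction that is absorbed into the error terms of~\eqref{eq:ULB Expl}.
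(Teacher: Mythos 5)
Your main-range argument coincides with the paper's: the leading-coefficient reduction you describe is exactly the identity $\widetilde{A}(f,n)=\sum_{\alpha\in\F_q^\times}A(\alpha^{-1}f,n)$ behind Lemma~\ref{lem-5}, and the observation that both the first and second moments pick up a factor $q-1$, so the Cauchy--Schwarz ratio gains a factor $q-1$, is precisely how the paper deduces Theorem~\ref{thm2}. The upper bound is also fine.

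The genuine gap is your treatment of the boundary subcase $\max(k_1,k_2)\delta=n$. Theorem~\ref{thm2} asserts the \emph{identical, fully explicit} inequality~\eqref{eq:ULB Expl}; there are no error terms into which a correction can be ``absorbed''. Quantitatively, for each of the roughly $2n/\delta$ ordered pairs $(k_1,k_2)$ with $\max(k_1,k_2)=n/\delta$, even the sharper crude bound of $(q-1)I_q(n)\approx(q-1)q^n/n$ pairs $(h_1,h_2)$ gives a total boundary contribution of order $(q-1)q^n/\delta$, which is the \emph{same} order as the diagonal main term $(q-1)I_q(n)\left\lceil n/\delta\right\rceil$ in the second moment; this would roughly triple the denominator and ruin the stated constant. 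Your proposed bound $(q-1)\pi_n^2\approx(q-1)q^{2n}/n^2$ is far worse: it dwarfs the whole denominator (which must remain of size about $(q-1)q^n/\delta$ times a bounded factor), so Cauchy--Schwarz would only yield the trivial bound $\widetilde{R}(n,g,q)\gg(q-1)n^2/\delta^2$. Note also that Lemma~\ref{lem-pollack} cannot be invoked in this subcase, since the shift $g^{k_2}-g^{k_1}$ has degree $n$, not less than $n$. The paper sidesteps all of this by building the restriction $0\le\delta k<n$ into the counting functions ($\widetilde{C}(f,n)$ and $B(f,n)$), so the boundary case never enters either moment, while the first moment is still $(q-1)I_q(n)\left\lceil n/\delta\right\rceil$, exactly what the numerator needs. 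Imposing that restriction in your definition of $\widetilde N$ repairs your argument, at which point it becomes the paper's proof.
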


The estimates~\eqref{eq:simple1},~\eqref{eq:simple3} and~\eqref{eq:simple2} also hold if $r(n,g,q)$ is replaced by $\widetilde{r}(n,g,q)$.

\section{The upper bound}

\subsection{Monic polynomials}
We start with the upper bound in~\eqref{eq:ULB Expl}, which is quite elementary. 
Let $\cI_q(n)$ be the set of monic irreducible polynomials of degree $n$ over $\F_q$.
The number  denoted by $I_q(n) = \#\cI_q(n)$, satisfies (see~\cite[Theorem~3.25]{LN})
\begin{equation}\label{Iqn}
 \frac{q^n}{n} - \frac{2q^{n/2}}{n}< I_q(n)=\frac{1}{n} \sum_{d \mid n} \mu \(\frac{n}{d}\) q^d \le \frac{q^n}{n} \qquad (n\ge 1) .
\end{equation}

Since $f=g^k + h$ and $h$ both have degree $n$, we have $0\le \deg g^k=k\delta \le n$. In the monic case, there are $I_q(n)$ choices for $h$ and at most $1+\lfloor n/\delta \rfloor$ choices for $k$, so 
$$ R(n,g,q) \le I_q(n) \(1+ \left\lfloor \frac{n}{\delta} \right\rfloor \) \le \frac{q^n}{n} \(1+\frac{n}{\delta}\)
=\frac{q^n}{\delta} \(1+\frac{\delta}{n}\),$$
by~\eqref{Iqn}. 

\subsection{Arbitrary polynomials}
Similarly, in the non-monic case we have
$$ \widetilde{R}(n,g,q) \le (q-1)I_q(n) \(1+ \left\lfloor \frac{n}{\delta} \right\rfloor \) \le 
\frac{(q-1)q^n}{\delta} \(1+\frac{\delta}{n}\).$$

\section{Auxiliary results}

\subsection{Bounds of some products}
For $f\in \F_q[x]$, let $|f|=q^{\deg f}$. 
Throughout, $p$ stands for a monic irreducible polynomial in $\F_q[x]$.
We define
$$ E(f) =   \prod_{p\mid f} \(1+\frac{1}{|p|}\) .$$

\begin{lemma}\label{lem-1}
Let $f\in \F_q[x]$. If $m\ge 1$ and $q^m\ge \deg f$, then
$$  E(f) \le  \prod_{\deg p\le m} \(1+\frac{1}{|p|}\).$$
\end{lemma}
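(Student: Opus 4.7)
Since $E(f)$ depends only on the set $S = \{p : p \mid f\}$ of distinct monic irreducible divisors of $f$, and $\sum_{p \in S}\deg p = \deg\bigl(\prod_{p \in S}p\bigr) \le \deg f \le q^m$, the strategy is to replace any $p \in S$ of degree exceeding $m$ by an unused monic irreducible of degree at most $m$ (which only increases $E(f)$), and then to bound the result by extending to the full set of monic irreducibles of degree at most $m$.

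The key combinatorial step is to show $\#S \le N(m) := \sum_{d \le m} I_q(d)$. Suppose for contradiction that $\#S \ge N(m)+1$. The minimum of $\sum_{p \in S}\deg p$ over all sets of $N(m)+1$ distinct monic irreducibles is attained by taking the $N(m)+1$ of smallest degree, namely all $N(m)$ primes of degree at most $m$ together with one of degree $m+1$ (such a prime exists since $I_q(m+1)\ge 1$). Hence
\[
\sum_{p \in S}\deg p \;\ge\; \sum_{d\le m} d\, I_q(d) + (m+1) \;\ge\; q^m + m + 1,
\]
where the second inequality is Gauss's identity $q^m = \sum_{d\mid m} d\, I_q(d) \le \sum_{d\le m} d\, I_q(d)$. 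This contradicts $\sum_{p \in S}\deg p \le q^m$.

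Setting $T = \{p \text{ monic irreducible} : \deg p \le m\}$, the bound $\#S \le N(m) = \#T$ yields
\[
\#(S \setminus T) \;=\; \#S - \#(S \cap T) \;\le\; \#T - \#(S \cap T) \;=\; \#(T \setminus S),
\]
so there is an injection $\phi \colon S \setminus T \to T \setminus S$. For each $p \in S \setminus T$ one has $|\phi(p)| \le q^m < |p|$, hence $1 + 1/|p| < 1 + 1/|\phi(p)|$. Replacing every $p \in S \setminus T$ by $\phi(p)$ produces a set $S' \subseteq T$ with
\[
E(f) \;=\; \prod_{p \in S}\!\left(1 + \frac{1}{|p|}\right) \;\le\; \prod_{p \in S'}\!\left(1 + \frac{1}{|p|}\right) \;\le\; \prod_{p \in T}\!\left(1 + \frac{1}{|p|}\right),
\]
the last inequality holding because every factor exceeds $1$.

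The main conceptual obstacle is the pigeonhole bound $\#S \le N(m)$: it is precisely the hypothesis $q^m \ge \deg f$, together with Gauss's identity, that prevents $f$ from having too many distinct irreducible divisors for the injection to exist. Everything else is a routine substitution exploiting the fact that $1 + 1/|p|$ is a decreasing function of $|p|$.
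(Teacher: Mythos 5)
Your proof is correct and takes essentially the same route as the paper: the paper's brief argument — that $E(f)$ is maximized by giving $f$ as many distinct small-degree irreducible factors as possible, combined with $\deg \prod_{\deg p\le m} p = \sum_{k\le m} k I_q(k) \ge q^m \ge \deg f$ — is precisely the exchange argument you carry out, resting on the same identity $\sum_{d\mid m} d I_q(d) = q^m$. Your pigeonhole bound $\#S \le N(m)$ and the injection $\phi$ simply make explicit the replacement step that the paper leaves implicit.
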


\begin{proof}
$E(f)$ is maximized if $f$ has as many distinct irreducible factors of small degree as possible.
Since
$$\deg \prod_{\deg p\le m}p =  \sum_{1\le k\le m} k I_q(k) \ge \sum_{k\mid m}k I_q(k)  = q^m \ge   \deg f,$$
the result follows. 
\end{proof}

\begin{lemma}\label{lem-2}
Let 
$
H_m =  \sum_{k=1}^m  \frac{1}{k}
$
be the $m$-th harmonic number. For $m\ge 1$, 
$$  \prod_{\deg p\le m} \(1+\frac{1}{|p|}\)  
< \exp\bigl(H_m\bigr) .
$$
\end{lemma}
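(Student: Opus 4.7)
The plan is to take logarithms and reduce the claim to a bound on $\sum_{\deg p \le m} 1/|p|$, which grouped by degree becomes $\sum_{k=1}^m I_q(k)/q^k$. From there, the upper bound $I_q(k) \le q^k/k$ in~\eqref{Iqn} collapses the sum to the harmonic number $H_m$.

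More concretely, I would first observe that for $y>0$ we have the strict inequality $\log(1+y) < y$, so
\begin{equation*}
\log \prod_{\deg p\le m}\left(1+\frac{1}{|p|}\right)
= \sum_{\deg p\le m}\log\left(1+\frac{1}{|p|}\right)
< \sum_{\deg p\le m}\frac{1}{|p|}.
\end{equation*}
Note that since $m\ge 1$ and $q\ge 2$, there is at least one monic irreducible polynomial of degree at most $m$ (e.g.\ $x$), so the sum is nonempty and the strict inequality survives. Next I would regroup by degree, using $|p|=q^{\deg p}$, to rewrite
\begin{equation*}
\sum_{\deg p\le m}\frac{1}{|p|} = \sum_{k=1}^{m}\frac{I_q(k)}{q^k}.
\end{equation*}

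Finally, I would invoke the upper bound $I_q(k)\le q^k/k$ from~\eqref{Iqn} term-by-term to obtain $\sum_{k=1}^m I_q(k)/q^k \le \sum_{k=1}^m 1/k = H_m$. Combining the two chains of inequalities yields $\log \prod_{\deg p\le m}(1+1/|p|) < H_m$, and exponentiating gives the desired conclusion. There is no real obstacle here: every step is a one-line application of a basic fact (the logarithmic inequality and the prime polynomial count). The only detail worth flagging is ensuring the strictness of the final bound, which is guaranteed by the nonemptiness of the product already for $m=1$.
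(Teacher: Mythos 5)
Your proof is correct and is essentially identical to the paper's: both take logarithms, bound $\log(1+1/|p|)$ by $1/|p|$, regroup by degree using $I_q(k)\le q^k/k$ from~\eqref{Iqn}, and exponentiate. Your extra remark about strictness (the product being nonempty) is a fine, if unneeded, refinement of the same argument.
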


\begin{proof}
We have 
$$ \sum_{\deg p\le m} \log  \(1+\frac{1}{|p|}\) <  \sum_{\deg p\le m}\frac{1}{|p|}
=  \sum_{k\le m} \frac{I_q(k)}{q^k} \le \sum_{k\le m} \frac{1}{k}=H_m,
$$
by~\eqref{Iqn}. The result follows from exponentiation.
\end{proof}

\begin{lemma}\label{lem-h}
For $m\ge 1$ we have
$$ \exp(H_m) \le e+ e^\gamma (m-1) < 1 + e^\gamma m.$$
\end{lemma}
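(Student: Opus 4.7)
The plan is to rewrite the left inequality as the statement that $b_m := \exp(H_m) - e^\gamma m$ is non-increasing in $m$; since $b_1 = e - e^\gamma$, this would yield $\exp(H_m) \le e^\gamma m + (e - e^\gamma) = e + e^\gamma(m-1)$. The monotonicity of $b_m$ amounts to bounding the increments $c_m := \exp(H_{m+1}) - \exp(H_m) = \exp(H_m)(e^{1/(m+1)} - 1)$ by $e^\gamma$.

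To prove $c_m \le e^\gamma$, I would first observe that $c_m \to e^\gamma$ as $m \to \infty$ (which follows from $\exp(H_m)/m \to e^\gamma$ together with $m(e^{1/(m+1)} - 1) \to 1$), and then show that the sequence $(c_m)$ is itself non-decreasing. The two together force $c_m \le e^\gamma$ for all $m \ge 1$. A telescoping sum then gives $\exp(H_m) - e = \sum_{k=1}^{m-1} c_k \le (m-1) e^\gamma$, which is the desired inequality.

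The main obstacle is the monotonicity of $c_m$. Setting $a = 1/(m+1)$, so that $1/(m+2) = a/(1+a)$, the condition $c_{m+1} \ge c_m$ rearranges after dividing by $\exp(H_m) e^{1/(m+1)}$ to the elementary inequality $g(a) := e^{a/(1+a)} + e^{-a} \ge 2$. Since $g(0) = 2$, it suffices to show $g'(a) \ge 0$ for $a \ge 0$. Differentiating, this is equivalent to $e^{a(2+a)/(1+a)} \ge (1+a)^2$, i.e., to $h(a) := a(2+a)/(1+a) - 2\log(1+a) \ge 0$. Using the identity $a(2+a)/(1+a) = (1+a) - 1/(1+a)$, one finds $h'(a) = a^2/(1+a)^2 \ge 0$ together with $h(0) = 0$, completing this step.

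The right inequality $e + e^\gamma(m-1) < 1 + e^\gamma m$ simplifies to $e - 1 < e^\gamma$, equivalently $\gamma > \log(e - 1) \approx 0.5413$, which holds by standard numerical estimates on Euler's constant.
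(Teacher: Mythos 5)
Your argument is correct, but it takes a genuinely different route from the paper: the paper disposes of the first inequality $\exp(H_m)\le e+e^\gamma(m-1)$ simply by citing Batir's inequality \cite[Cor. 2.2]{Batir}, and then, exactly as you do, reduces the second inequality to the numerical fact $e-e^\gamma<1$ (equivalently $\gamma>\log(e-1)$). You instead give a self-contained elementary proof: writing $c_m=\exp(H_{m+1})-\exp(H_m)=\exp(H_m)\bigl(e^{1/(m+1)}-1\bigr)$, you show $(c_m)$ is non-decreasing with limit $e^\gamma$ (the limit coming from $\exp(H_m)/m\to e^\gamma$ and $m\bigl(e^{1/(m+1)}-1\bigr)\to 1$), so $c_m\le e^\gamma$, and telescoping from $\exp(H_1)=e$ gives the claim; the monotonicity of $c_m$ correctly reduces, after dividing by $\exp(H_m)e^{1/(m+1)}$ and substituting $a=1/(m+1)$, to $e^{a/(1+a)}+e^{-a}\ge 2$, which your computation with $h(a)=(1+a)-\tfrac{1}{1+a}-2\log(1+a)$, $h'(a)=a^2/(1+a)^2\ge 0$, establishes. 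All steps check out, including the standard fact that a non-decreasing sequence is bounded above by its limit. What each approach buys: the paper's citation keeps the lemma to two lines but makes it depend on an external reference, while your argument makes the bound independent and verifiable within the paper at the cost of roughly a page of elementary calculus; if one wanted to inline a proof into the paper, yours would serve.
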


\begin{proof} The first inequality appears in Batir \cite[Cor. 2.2]{Batir}. 
The second inequality follows from $e-e^\gamma = 0.9372... < 1$. 
\end{proof}

\begin{lemma}\label{lem-3}
Let $f\in \F_q[x]$, $\deg f\ge 2$. We have
$$E(f) \le 1+ e^\gamma \min\left\{ \frac{\deg f}{q},\  \frac{\log (\deg f)}{\log q}\right\}.$$
\end{lemma}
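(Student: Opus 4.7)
The plan is to establish both bounds
\[
E(f) \le 1 + e^\gamma \frac{\deg f}{q} \qquad\text{and}\qquad E(f) \le 1 + e^\gamma \frac{\log \deg f}{\log q}
\]
separately; Lemma~\ref{lem-3} then follows by taking the minimum.

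For the bound $E(f) \le 1 + e^\gamma \deg f/q$, I would first treat the case $\deg f \le q$: using that each prime divisor $p$ of $f$ has $|p| \ge q$ and that $f$ has at most $\deg f$ distinct prime divisors (since each contributes at least $1$ to the total degree), I would obtain $E(f) \le (1 + 1/q)^{\deg f} \le e^{\deg f/q}$, and the elementary inequality $e^x \le 1 + e^\gamma x$ on $[0, 1]$---which follows from the concavity of $\phi(x) = 1 + e^\gamma x - e^x$ together with the endpoint values $\phi(0) = 0$ and $\phi(1) = 1 + e^\gamma - e > 0$---closes the case. For $\deg f > q$, I would refine by separating prime divisors of $f$ by degree: setting $m = \rf{\log \deg f/\log q}$, the factor coming from primes of degree at most $m - 1$ is at most $\prod_{\deg p \le m-1}(1 + 1/|p|)$, while the number of distinct prime divisors of degree exactly $m$ is bounded by $(\deg f - q^{m-1})/m$ (by a degree-counting argument), giving a tighter estimate than Lemma~\ref{lem-1} alone.

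For the bound $E(f) \le 1 + e^\gamma \log \deg f/\log q$, I would take $m = \rf{\log \deg f/\log q}$ so that $q^m \ge \deg f$. Applying Lemmas~\ref{lem-1}, \ref{lem-2}, and \ref{lem-h} in sequence gives
\[
E(f) \le \prod_{\deg p \le m}\(1 + \frac{1}{|p|}\) < \exp(H_m) \le e + e^\gamma(m - 1).
\]
When $\log \deg f/\log q = m$ is an integer, the inequality $e - 1 \le e^\gamma$ yields $E(f) \le 1 + e^\gamma m = 1 + e^\gamma \log \deg f/\log q$ immediately. Otherwise, I would invoke the refined estimate from the previous paragraph, which yields a bound of the form $\exp\bigl(H_{m-1} + (q^\beta - 1)/(mq)\bigr)$ with $\beta = \log \deg f/\log q - (m - 1) \in (0, 1)$, and show that this is bounded above by $1 + e^\gamma(m - 1 + \beta)$.

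The main obstacle is the non-integer case of the second bound: the direct chain of Lemmas~\ref{lem-1}--\ref{lem-h} overshoots the target by up to $e^\gamma$ because of rounding in $\rf{\log \deg f/\log q}$. Closing this gap requires the refined split described above, together with the sharp form of Lemma~\ref{lem-h} (specifically $\exp(H_{m-1}) \le e + e^\gamma(m-2)$) and a careful analysis of the correction $\exp((q^\beta - 1)/(mq))$; exploiting the positive slack $1 + e^\gamma - e > 0$ coming from Lemma~\ref{lem-h} is essential to make the rounding harmless.
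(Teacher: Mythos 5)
Your route is essentially the paper's: the case $\deg f\le q$ via $(1+1/q)^{\deg f}\le e^{\deg f/q}$ and a linear bound on $e^x$, the exact-power case via Lemmas~\ref{lem-1}, \ref{lem-2} and~\ref{lem-h}, and, for intermediate degrees, the same refinement (all irreducibles of small degree, plus a degree-count bound on the number of larger factors). The genuine gap is exactly the step you label the main obstacle: you assert, but do not prove, that $\exp\bigl(H_{m-1}+(q^{\beta}-1)/(mq)\bigr)\le 1+e^{\gamma}(m-1+\beta)$ for $0<\beta<1$, and the mechanism you point to (the slack $1+e^{\gamma}-e>0$) does not by itself handle the interpolation in $\beta$. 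The inequality is true, and it follows from the same chord argument you already used for $e^x\le 1+e^{\gamma}x$: the exponent on the left is convex in $\beta$ (since $q^{\beta}$ is convex), the logarithm of the right-hand side is concave in $\beta$, and at the endpoints $\beta=0,1$ the inequality reduces to Lemma~\ref{lem-h}; hence the convex side lies below its chord, the concave side above its chord, and the endpoint comparison closes the case. For comparison, the paper sidesteps the $q^{\beta}$ correction by parametrizing linearly in the degree, writing $\deg f=q^{m}+\alpha(q^{m+1}-q^{m})$, bounding the correction exponent by $\alpha/(m+1)$ so that the interpolation is between $H_m$ and $H_{m+1}$, and then using concavity of the logarithm a second time to get $m+\alpha\le \log(\deg f)/\log q$; this is a slightly cleaner bookkeeping of the same idea, and your parametrization works once the convexity--concavity step is actually written out.

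A second, smaller gap: your plan for the bound $E(f)\le 1+e^{\gamma}\deg f/q$ when $\deg f>q$ trails off (``giving a tighter estimate than Lemma~\ref{lem-1} alone'') without deriving the stated bound. The economical repair, which is what the paper does, is to note that for $\deg f>q$ one has $\deg f/q\ge \log(\deg f)/\log q$ except when $(q,\deg f)=(2,3)$, so the logarithmic bound (which your second argument supplies for all $\deg f\ge 2$, including $\deg f\le q$, so the case $(q,\deg f)=(3,2)$ is automatically covered in your arrangement) already implies the linear one there; the single exceptional pair $(q,\deg f)=(2,3)$ must be checked directly, e.g.\ $E(f)\le (1+1/2)^{2}<1+\tfrac{3}{2}e^{\gamma}$. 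Also note that your ``sharp form'' $\exp(H_{m-1})\le e+e^{\gamma}(m-2)$ is only available for $m\ge 2$; for $m=1$ you should simply use $\exp(H_0)=1$, which is what the endpoint $\beta=0$ of the chord argument needs.
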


\begin{proof}
Let $\varphi=\deg f$. If $\varphi \le q$, we have
$$ E(f)\le  \(1+\frac{1}{q}\)^{\varphi} \le \exp\(\frac{\varphi}{q}\) \le 1+(e-1)\frac{\varphi}{q}<  1+ e^\gamma\frac{\varphi}{q} .$$
Since $q/\log q$ is increasing for $q\ge 3$, and $2/\log 2 = 4/\log 4$, we have $\varphi /q\le \log \varphi / \log q$ if $\varphi \le q$, unless $(q,\varphi)=(3,2)$,
in which case 
$E(f)\le (1+1/3)^2 < 1+e^\gamma \log 2 / \log 3 $.
Thus the result holds for $\varphi\le q$. 

If $\varphi = q^m$ for some integer $m\ge 2$, the result follows from combining Lemmas~\ref{lem-1}, \ref{lem-2} and~\ref{lem-h},
since $\varphi /q \ge \log \varphi / \log q$ for $\varphi\ge q^2 \ge 4$.

In the remaining case, $q^m < \varphi < q^{m+1}$ for some integer $m\ge 1$. 
We write 
$$ \varphi = q^m + \alpha (q^{m+1}-q^m) $$
for some $0<\alpha <1$. 
Since $E(f)$ is maximized if $f$ has as many distinct irreducible factors of small degree as possible, 
and 
$$
\deg \prod_{\deg p\le m}
p \ge q^m
$$ 
(see the proof of Lemma~\ref{lem-1}), we have
\begin{equation*}
E(f) \le  \prod_{\deg p\le m} \(1+\frac{1}{|p|}\) \cdot \( 1+ \frac{1}{q^{m+1}}\)^{\frac{\varphi-q^m}{m+1}}.
\end{equation*}
Taking logarithms and using Lemma~\ref{lem-2} yields
$$ \log(E(f)) \le H_m + \frac{\alpha}{m+1}. $$
The inequality
$$ H_m + \frac{\alpha}{m+1} \le \log (1+e^\gamma (m+\alpha))$$
holds for $\alpha=0,1$ by Lemma~\ref{lem-h}, 
and for $0<\alpha<1$ it follows from the concavity of the logarithm.
As a result,
$$  E(f) \le  1+e^\gamma (m+\alpha).$$
We have
$$ \frac{\log \varphi}{\log q} = m + \frac{\log (1+\alpha (q-1))}{\log q} \ge  m+\alpha,$$
where the last inequality is obvious for $\alpha =0,1$, and for $0<\alpha <1$ it follows 
again from the concavity of the logarithm. Consequently,
$$ E(f) \le 1+ e^\gamma \frac{\log \varphi}{\log q}.$$
Since $\varphi >q$, we have $\varphi /q \ge \log \varphi / \log q$, unless $(q,\varphi)=(2,3)$, in which case
$E(f)\le (1+1/2)^2 < 1+e^\gamma 3/2$. This completes the proof. 
\end{proof}

\subsection{Irreducibility of shifted irreducible polynomials}
As before, let $\cI_q(n)$ be the set of monic irreducible polynomials of 
degree $n$ over $\F_q$, and let $\widetilde\cI_q(n)$ be the set of arbitrary 
 irreducible polynomials of degree $n$ over $\F_q$,

For $f\in \F_q[x]$, let
\begin{equation*}
\begin{split}
A(f,n) & = \#\{ h \in \F_q[x]~:~(h, h+f)  \in \cI_q(n)\times \cI_q(n) \},\\
\widetilde{A}(f,n) & = \#\{ h \in \F_q[x]~:~(h, h+f)  \in 
\widetilde \cI_q(n)\times \widetilde\cI_q(n) \}.
\end{split}
\end{equation*}
The following explicit upper bound for $A(f,n)$ is due to Pollack~\cite[Lemma 2]{Pollack}.

\begin{lemma}
\label{lem-pollack}
Let $n\ge 1$ and let $f\in \F_q[x]$, $f\neq 0$, $0\le \deg f< n$.
Then
$$ A(f,n)\le  \frac{8q^n}{n^2}\prod_{p\mid f} \(1-\frac{1}{|p|}\)^{-1}.$$ 
\end{lemma}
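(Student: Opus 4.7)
The plan is to apply Selberg's upper-bound sieve to the set $\cA$ of monic polynomials of degree $n$ in $\F_q[x]$, sifting out those $h$ for which $h(h+f)$ has a monic irreducible factor of degree at most $z$, for a parameter $z$ with $2z\le n-1$. Since every pair $(h,h+f)$ counted by $A(f,n)$ consists of monic irreducibles of degree $n$, and hence $h(h+f)$ has no irreducible factor of degree $\le z<n$, we obtain $A(f,n)\le S(\cA,z)$, where $S(\cA,z)$ denotes the sifted cardinality.

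First I would record the local data. For a monic irreducible $p$ with $\deg p\le z$, the number of residues $h\bmod p$ with $p\mid h(h+f)$ equals $\omega(p)=2$ when $p\nmid f$ and $\omega(p)=1$ when $p\mid f$. The Selberg sieve then yields
$$S(\cA,z)\le \frac{q^n}{G(z)}+R, \qquad G(z)=\sum_{\substack{d\text{ monic, squarefree}\\ \deg d\le z}}\prod_{p\mid d}\frac{\omega(p)}{|p|-\omega(p)}.$$
A pleasant feature of the polynomial setting is that monic polynomials of degree $n$ are equidistributed in residue classes modulo any $d$ with $\deg d\le n$, so choosing $2z\le n-1$ makes the remainder $R$ vanish exactly rather than merely be bounded by an error term; this is one of the simplifications over the integer case.

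Next I would bound $G(z)$ from below. Extracting the local contribution at primes dividing $f$ gives
$$G(z)\ge \prod_{p\mid f}\(1-\frac{1}{|p|}\)\cdot \sum_{\substack{d\text{ monic, squarefree}\\ \deg d\le z,\ \gcd(d,f)=1}}\prod_{p\mid d}\frac{2}{|p|-2},$$
and the polynomial analogue of Mertens' theorem, $\sum_{\deg p\le k}1/|p|=\log k+O(1)$, shows that the last sum grows like $c\, z^2$ for an explicit constant $c$. Taking $z=\lfloor (n-1)/2\rfloor$ and substituting back produces the stated inequality; the constant $8$ arises as the product of the factor $4$ from $z\approx n/2$ and the factor $2$ from the dimension-two Mertens estimate.

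The main obstacle is the explicit tracking of the constant $8$: both the Selberg inversion and the polynomial Mertens estimates must be carried out so that no spurious multiplicative loss enters, and the $p\mid f$ local factors must be isolated cleanly so that the final shape is exactly $\prod_{p\mid f}(1-1/|p|)^{-1}$ rather than the slightly weaker $\prod_{p\mid f}(1-2/|p|)^{-1}$ that would come from a naive application. This careful bookkeeping is precisely the content of Pollack's argument in \cite{Pollack}, to which I would ultimately appeal for the exact constant.
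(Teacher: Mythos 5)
The paper does not actually prove this statement: it is quoted verbatim as Lemma~2 of Pollack \cite{Pollack}, so your concluding appeal to \cite{Pollack} is consistent with what the authors do, and the overall strategy you describe (Selberg's sieve applied to $h(h+f)$ over monic $h$ of degree $n$, with local densities $\omega(p)=2$ for $p\nmid f$, $\omega(p)=1$ for $p\mid f$, and with the remainder vanishing identically because monic polynomials of degree $n$ are exactly equidistributed modulo any modulus of degree at most $2z\le n-1$) is indeed the strategy behind Pollack's proof.

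As a self-contained argument, however, your sketch has a genuine gap at the main-term step. The displayed lower bound
$$G(z)\ \ge\ \prod_{p\mid f}\Bigl(1-\frac{1}{|p|}\Bigr)\cdot\sum_{\substack{d\ \text{monic squarefree}\\ \deg d\le z,\ \gcd(d,f)=1}}\ \prod_{p\mid d}\frac{2}{|p|-2}$$
is trivially true (the product is $\le 1$ and the restricted sum is a subsum of $G(z)$), but it cannot yield the lemma: you would then need the coprime-restricted sum to be at least $n^2/8$ uniformly in $f$, and this fails. If, say, $f$ is divisible by every monic irreducible of degree up to about $\log_q n$ (compatible with $\deg f<n$), the restricted sum is smaller than the unrestricted one by a factor of order $(\log_q n)^{2}$, so it is of size about $n^2/(\log_q n)^2$ rather than $\asymp n^2$. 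That deficit is exactly what the factor $\prod_{p\mid f}(1-1/|p|)^{-1}$ in the conclusion must absorb, so inserting the product ``for free'' in front of the restricted sum recovers nothing; a naive removal of the coprimality condition only buys back $\prod_{p\mid f}(1-2/|p|)$, which is the weaker (and for $q=2$ possibly vacuous) shape you yourself flag. Obtaining exactly $\prod_{p\mid f}(1-1/|p|)^{-1}$ together with the constant $8$ requires a different evaluation of the Selberg main term --- for instance lower-bounding $G(z)$ by the completely multiplicative sum $\sum_{\deg m\le z} w(m)/|m|$ with $w(p)=\omega(p)$, interpreting $w(m)$ through factorizations $m=ab$ in which the part of $m$ supported on $p\mid f$ is forced into one factor, and using the exact count $q^{j}\prod_{p\mid f}(1-1/|p|)$ of monic polynomials of degree $j$ coprime to $f$ --- which is precisely the ``bookkeeping'' you defer to \cite{Pollack}. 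So in the end your proposal reduces to the same citation the paper makes; as an independent derivation it is incomplete at the decisive step.
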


It is convenient to estimate the last product in terms of $E(f)$.

\begin{lemma}\label{lem-5}
Let $n\ge 1$ and let $f\in \F_q[x]$, $f\neq 0$, $0\le \deg f< n$.
Then
$$ A(f,n)\le  \frac{8 q^n}{n^2 (1-1/q)}E(f) \mand
 \widetilde{A}(f,n)\le  \frac{8 q^{n+1}}{n^2 }E(f).$$
\end{lemma}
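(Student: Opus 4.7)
The idea is to convert the factor $(1-1/|p|)^{-1}$ appearing in Pollack's bound (Lemma~\ref{lem-pollack}) into the factor $(1+1/|p|)$ making up $E(f)$, at the cost of an extra correction $(1-1/|p|^2)^{-1}$, and then bound the resulting product over all primes dividing $f$ by the global product, which is just $\zeta_{\F_q[x]}(2)$.

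\medskip

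\noindent\emph{Step 1: Algebraic identity.} For every monic irreducible $p$ one has
$$
\(1-\frac{1}{|p|}\)^{-1} = \(1+\frac{1}{|p|}\) \(1-\frac{1}{|p|^2}\)^{-1},
$$
hence
$$
\prod_{p\mid f}\(1-\frac{1}{|p|}\)^{-1} = E(f) \prod_{p\mid f} \(1-\frac{1}{|p|^2}\)^{-1}.
$$

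\noindent\emph{Step 2: Global bound on the correction.} Since each factor $(1-|p|^{-2})^{-1}\ge 1$, extending the product over $p\mid f$ to the product over all monic irreducibles only increases it. Using the Euler product for the zeta function of $\F_q[x]$,
$$
\prod_p \(1-\frac{1}{|p|^s}\)^{-1} = \sum_{f \text{ monic}} |f|^{-s} = \sum_{n\ge 0} q^{n(1-s)} = \frac{1}{1-q^{1-s}},
$$
valid for $\operatorname{Re}(s)>1$. Setting $s=2$ gives
$$
\prod_{p\mid f} \(1-\frac{1}{|p|^2}\)^{-1} \le \prod_p \(1-\frac{1}{|p|^2}\)^{-1} = \frac{1}{1-1/q}.
$$
Combining with Lemma~\ref{lem-pollack} yields the first inequality
$$
A(f,n) \le \frac{8q^n}{n^2}\cdot\frac{E(f)}{1-1/q} = \frac{8q^n}{n^2(1-1/q)} E(f).
$$

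\medskip

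\noindent\emph{Step 3: The non-monic case.} Every irreducible polynomial of degree $n$ over $\F_q$ is uniquely of the form $c\,h_1$ with $c\in\F_q^*$ and $h_1$ monic irreducible of degree $n$. If $h=c\,h_1$ and $\deg f<n$, then $h+f$ has leading coefficient $c$, and $h+f=c(h_1+f/c)$ is irreducible iff $h_1+f/c$ is monic irreducible. Therefore
$$
\widetilde{A}(f,n) = \sum_{c\in\F_q^*} A(f/c,\, n).
$$
Since $f/c$ has the same monic irreducible factors as $f$, we have $E(f/c)=E(f)$, so applying the bound from Step~2 to each term gives
$$
\widetilde{A}(f,n) \le (q-1)\cdot \frac{8q^n}{n^2(1-1/q)} E(f) = \frac{8q^{n+1}}{n^2} E(f),
$$
which is the second inequality.

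\medskip

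\noindent\emph{Where the difficulty lies.} The argument is essentially bookkeeping once one notices the factorization $(1-|p|^{-1})^{-1}=(1+|p|^{-1})(1-|p|^{-2})^{-1}$; the only substantive input beyond Pollack's lemma is the closed form $\zeta_{\F_q[x]}(2)=(1-1/q)^{-1}$, which is the precise reason the constant $1-1/q$ appears. The non-monic case requires no new ideas beyond the observation that scaling $f$ by a unit preserves its radical.
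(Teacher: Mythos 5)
Your proposal is correct and follows essentially the same route as the paper: it converts Pollack's factor into $E(f)$ via $(1-|p|^{-1})^{-1}=(1+|p|^{-1})(1-|p|^{-2})^{-1}$, bounds the correction by $\zeta_{\F_q[x]}(2)=\sum_{f \text{ monic}}|f|^{-2}=(1-1/q)^{-1}$, and handles the non-monic case through $\widetilde{A}(f,n)=\sum_{\alpha\in\F_q^\times}A(\alpha^{-1}f,n)$ together with $E(\alpha^{-1}f)=E(f)$. No gaps; your Step 3 just spells out the unit-scaling argument that the paper leaves implicit.
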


\begin{proof}
The bound for $A(f,n)$ follows from Lemma~\ref{lem-pollack} and 
$$
\prod_{p\mid f} \(1-\frac{1}{|p|^2}\)^{-1} <  \prod_{k\ge 1} \(1-\frac{1}{q^{2k}}\)^{-I_q(k)} 
=\sum_{f\in \F_q[x]} \frac{1}{|f|^2}
=\frac{1}{1-1/q}.
$$
 Since 
$$\widetilde{A}(f,n)=\sum_{\alpha \in  \F_q^\times} A(\alpha^{-1} f, n), 
$$
the bound on  $\widetilde{A}(f,n)$ follows from the bound on ${A}(f,n)$. 
\end{proof}

\section{The lower bounds}

\subsection{Monic polynomials}\label{sec:monicpolys}
The first half of the proof is modeled after Romanoff~\cite{Romanoff}. 
For $f\in \F_q[x]$, let
$$  B(f,n)= \# \{ (k_1,k_2)~:~g^{k_1}-g^{k_2}=f, \ 0 \le \delta k_i < n \},$$
and 
$$ C(f,n)=   \# \{ (h,k)~:~h+g^k=f, \ h \in \cI_q(n),  0\le \delta k<n \}.$$
We count in two different ways the solutions to 
$ g^{k_1}-g^{k_2}-h_1+h_2=0,$
where the $h_i$ are monic irreducible of degree $n$ and $0\le \delta k_i <n$.
First, counting according to $f=  g^{k_1}-g^{k_2}=h_1-h_2$ shows that the number of solutions is 
$ \sum_{f\in \F_q[x]} A(f,n) B(f,n).$ Second, counting according to $f=g^{k_1}+h_2=g^{k_2}+h_1$, shows that the
number of solutions is $\sum_{f\in \F_q[x]} C(f,n)^2$. Thus 
$$ \sum_{f\in \F_q[x]} C(f,n)^2=\sum_{f\in \F_q[x]} A(f,n) B(f,n).$$
We have
$$
 \sum_{f\in \F_q[x]} C(f,n) = I_q(n) \left\lceil \frac{n}{\delta} \right\rceil
\ge \frac{q^n}{n}\(1-\frac{2}{q^{n/2}}\) \frac{n}{\delta} = \frac{q^n}{\delta}\(1-\frac{2}{q^{n/2}}\)  .
$$
Let $\varepsilon(f,n)=1$ if $ C(f,n)\ge 1$ and $\varepsilon(f,n)=0$ otherwise. By 
the Cauchy--Schwarz inequality we have
\begin{equation}
\label{Req}
\begin{split}
 R(n,g,q) &=  \sum_{f\in \F_q[x]} \varepsilon(f,n)^2 \ge \frac{ \( \sum_{f\in \F_q[x]} C(f,n)\)^2}{\sum_{f\in \F_q[x]} C(f,n)^2}\\
& \ge \frac{ q^{2n}\delta^{-2}\(1-2 q^{-n/2}\)^2 }{\sum_{f\in \F_q[x]} A(f,n) B(f,n)}.
\end{split}
\end{equation}
We need an upper bound for the last denominator. From Lemma~\ref{lem-5} we have
\begin{equation}\label{ABfirst}
\begin{split}
\sum_{f\in \F_q[x]} A(f,n) &B(f,n)\\
& \le    I_q(n) \left\lceil \frac{n}{\delta}\right\rceil + \frac{8 q^n}{n^2(1-1/q)} \sum_{f\neq 0} E(f) B(f,n).
\end{split}
\end{equation}
Writing $B(f,n)$ as a sum over $k_1, k_2$, and changing the order of summation, we obtain\begin{equation*}
\begin{split}
\sum_{f\in \F_q[x]} & A(f,n) B(f,n)\\
&  \le \frac{q^n}{n} \(\frac{n}{\delta}+1\)+ \frac{16 q^n}{n^2(1-1/q)} \sum_{0\le k_1<k_2 < n/\delta} E(g^{k_2}-g^{k_1}) \\
& \le  \frac{q^n}{\delta} \(1+\frac{\delta}{n}\)+ \frac{16 q^n E(g)}{n^2(1-1/q)} 
\sum_{1\le  k \le n/\delta}\(\frac{n}{\delta} +1 -k\) E(g^{k}-1) ,
\end{split}
\end{equation*}
where we put $k=k_2-k_1$. To estimate the last sum, we write
\begin{equation*}
\begin{split}
\sum_{1\le  k \le n/\delta}&\(\frac{n}{\delta} +1 -k\)  E(g^{k}-1) \\
& \qquad = \sum_{1\le  k \le n/\delta}  \(\frac{n}{\delta} +1 -k\)  \prod_{p\mid (g^k-1)} \(1+\frac{1}{|p|}\) \\
& \qquad = \sum_{1\le  k \le n/\delta} \(\frac{n}{\delta} +1 -k\)  \sum_{f\mid (g^k-1)} \frac{\mu^2(f)}{|f|} \\
& \qquad = \sum_{\gcd(f,g)=1}  \frac{\mu^2(f)}{|f|} 
\sum_{\substack{1\le  k \le n/\delta \\ g^k \equiv 1 \bmod f}} \(\frac{n}{\delta} +1 -k\) \\
& \qquad = \sum_{\gcd(f,g)=1}   \frac{\mu^2(f)}{|f|}  \sum_{1\le  j \le n/(\delta \ord_f(g) )}  \(\frac{n}{\delta} +1 - j \ord_f(g) \)  \\
& \qquad \le \frac{n^2 (1+\delta/n)}{2\delta^2}  \sum_{\gcd(f,g)=1}  \frac{\mu^2(f)}{|f| \ord_f(g)},
\end{split}
\end{equation*}
where the sums are over monic polynomials $f$ and $\ord_f(g)$ denotes the multiplicative order of $g$ modulo $f$. 
We have shown that
\begin{equation}\label{AB}
 \sum_{f\in \F_q[x]} A(f,n) B(f,n)  \le  \frac{q^n}{\delta} \(1+\frac{\delta}{n}\)\(1+ \frac{8  E(g) S(g)}{\delta (1-1/q)}    \),
\end{equation}
where 
$$S(g)=\sum_{\gcd(f,g)=1}  \frac{\mu^2(f)}{|f| \ord_f(g)} =\sum_{\ell \ge 1} \frac{1}{\ell} \sum_{\ord_f(g)=\ell}\frac{\mu^2(f)}{|f|}  .$$
As in~\cite{MRS}, we use Abel summation to estimate the last sum.  We define
$$
T_g(\ell) = \sum_{\ord_f(g)=\ell}\frac{\mu^2(f)}{|f|}
$$
and consider the function 
\begin{equation*}
\begin{split}
H_g(x) & = \sum_{\ell\le x} T_g(\ell) = \sum_{ \ord_f(g) \le x}\frac{\mu^2(f)}{|f|}  \\
& \le \sum_{f\mid  \prod_{\ell\le x}(g^\ell-1)} \frac{\mu^2(f)}{|f|} 
= E\( \prod_{\ell \le x}(g^\ell-1)\) .
\end{split}
\end{equation*}
Since $\prod_{\ell \le x}(g^\ell-1)$ and $g$ are relatively prime, we have
$$ E(g) H_g(x) \le E(g)  E\( \prod_{\ell \le x}(g^\ell-1)\) =  E\(g  \prod_{\ell \le x}(g^\ell-1)\).$$
For $x\ge 1$, define
$$ z= \deg\(g  \prod_{\ell \le x}(g^\ell-1)\) = \delta\(1+\frac{ \lfloor x\rfloor \lfloor x+1\rfloor}{2}\) \ge 2.$$
Lemma~\ref{lem-3} shows that
\begin{equation}\label{EH}
E(g) H_g(x) \le 1+e^\gamma  \min\left\{ \frac{z}{q}, \   \frac{\log z}{\log q}\right\},
\end{equation}
which implies $\lim_{x\to \infty} H_g(x)/x = 0$. Abel summation yields
\begin{equation}\label{ES}
\begin{split}
 E(g) S(g)& = E(g) \int_1^\infty \frac{H_g(x)}{x^2} dx \\
  & \le  1+e^\gamma \int_1^\infty \frac{\log (\delta(1+ \lfloor x\rfloor \lfloor x+1\rfloor/2))}{\log q}\frac{dx }{x^2}\\
& < 1+ e^\gamma \frac{\log \delta + 1.771}{\log q} < 1+ e^\gamma \frac{\log 6\delta}{\log q}.
\end{split}
\end{equation}
If $q\ge 4\delta$, we use $z=2\delta$ for $1\le x <2$, and $z\le \delta(1+ x (x+1)/2) \le  \delta x^2$ for $x\ge 2$. With~\eqref{EH} we get
\begin{equation*}
\begin{split}
 E(g) S(g) & =  E(g) \int_1^\infty \frac{H_g(x)}{x^2} dx \\
 & \le  1+
 e^\gamma \int_1^{2} \frac{2\delta}{q} \frac{dx}{x^2}
  + e^\gamma \int_2^{\sqrt{q/\delta}} \frac{\delta x^2}{q} \frac{dx}{x^2}
  +  e^\gamma  \int_{\sqrt{q/\delta}}^\infty \frac{\log (\delta x^2)}{\log q}  \frac{dx}{x^2}\\
  & = 1+e^\gamma  \(   \delta/q +  \sqrt{\delta/q} -2 \delta/q+ \frac{2  + \log q}{  \sqrt{q/\delta}\log q}  \) \\
& < 1+5 e^\gamma \sqrt{\delta/q}.
\end{split}
\end{equation*}
This estimate also holds if $4\delta>q\ge 2$, because in that case it follows 
from~\eqref{ES} and
$$ \frac{5\sqrt{\delta}}{\log 6\delta} >  2\frac{\sqrt{6\delta}}{\log 6\delta} > \frac{\sqrt{q}}{\log q}.$$
To summarize, we have shown that
$$  E(g) S(g) < 1 + e^\gamma \min \left\{ 5 \sqrt{\delta/q}, \ \frac{\log 6\delta}{\log q}\right\}.$$
After inserting this estimate into~\eqref{AB},
the lower bound in~\eqref{eq:ULB Expl}  follows from~\eqref{Req}.

It remains to establish the lower bound in~\eqref{eq:simple2}.
If $n/\delta \le 50$, this follows from $R(n,g,q)\ge I_q(n)$ and the estimate~\eqref{Iqn}.
Hence we may assume $n> 50 \delta$. Define 
$$\alpha(q,\delta)=1+\frac{8 }{\delta(1-1/q)} \(1+e^\gamma \frac{\log 6\delta}{\log q}\).$$
Note that $\alpha(q,\delta)$ is decreasing in $q$. For $q=2$, it is decreasing in $\delta$.
Hence $\alpha(q,\delta) \le \alpha(2,1) < 91$ for all $q\ge 2$, $\delta \ge 1$. 
Using this estimate in~\eqref{eq:ULB Expl} establishes the lower bound in~\eqref{eq:simple2}.

\subsection{Arbitrary polynomials}
For the proof of Theorem~\ref{thm2}, it remains to show that the lower bound in~\eqref{eq:ULB Expl} also applies to
 $\widetilde{r}(n,g,q)$. We  only indicate which changes are needed to adapt the proof 
 of Theorem~\ref{thm1}. Replace $A(f,n)$ by $\widetilde{A}(f,n)$, and $C(f,n)$ by
 $$ \widetilde{C}(f,n) =   \# \{ (h,k)~:~ h+g^k=f,  h\in  \widetilde{\cI}_q(n), \ 0\le \delta k <n \}.$$
Since $\sum_{f\in \F_q[x]} \widetilde{C}(f,n) = (q-1) \sum_{f\in \F_q[x]} C(f,n)$,
the analogue of~\eqref{Req} is 
$$
\widetilde{R}(n,g,q) \ge \frac{ \( \sum_{f\in \F_q[x]} \widetilde{C}(f,n)\)^2}{\sum_{f\in \F_q[x]} \widetilde{A}(f,n) B(f,n)}
= \frac{ (q-1)^2\( \sum_{f\in \F_q[x]} C(f,n)\)^2}{\sum_{f\in \F_q[x]} \widetilde{A}(f,n) B(f,n)}.
$$
From Lemma~\ref{lem-5} we have
\begin{equation*}
\begin{split}
 \sum_{f\in \F_q[x]} \widetilde{A}(f,n) & B(f,n) \\
 &\le    (q-1)I_q(n) \left\lceil \frac{n}{\delta}\right\rceil + \frac{8 q^n(q-1)}{n^2(1-1/q)} \sum_{f\neq 0} E(f) B(f,n),
\end{split}
\end{equation*}
which is the same as the right-hand side of~\eqref{ABfirst} multiplied by $(q-1)$. 
Consequently, all subsequent upper bounds for $ \sum_{f\in \F_q[x]} A(f,n) B(f,n)$ become valid upper bounds for $ \sum_{f\in \F_q[x]} \widetilde{A}(f,n) B(f,n)$, after multiplying by $(q-1)$. 
It follows that the lower bound for $r(n,g,q)$ derived from~\eqref{Req} is also a valid lower bounds for $\widetilde{r}(n,g,q)$.

The reasoning at the end of  Section~\ref{sec:monicpolys} for the lower
 bound in~\eqref{eq:simple2} also applies to $\widetilde{r}(n,g,q)$.


\begin{thebibliography}{99}

\bibitem{AB-SR} J. C. Andrade, L. Bary-Soroker and Z. Rudnick,
`Shifted convolution and the Titchmarsh divisor problem over $\F_q[t]$', 
{\it Phil. Trans. of the Royal Society\/}, (to appear).

\bibitem{BB-S} E. Bank and L. Bary-Soroker, 
`Prime polynomial values of linear functions in short intervals', 
 {\it J. Number Theory\/},  {\bf 151} (2015), 263--275. 

\bibitem{BB-SF} E. Bank, L. Bary-Soroker and A. Fehm, 
`Sums of two squares in short intervals in polynomial rings over finite fields', 
{\it Preprint\/}, 2015, \url{http://arxiv.org/1509.02013}.

\bibitem{BB-SR} E. Bank, L. Bary-Soroker and L. Rosenzweig, 
`Prime polynomials in short intervals and in arithmetic progression', 
{\it Duke Math.,\/}, {\bf 164} (2015), 277--295.

\bibitem{B-S1} L. Bary-Soroker, `Irreducible values of polynomials', 
{\it Adv. Math.\/}, {\bf 229}  (2012), 854--874.

\bibitem{B-S2} L. Bary-Soroker, `Hardy-Littlewood tuple conjecture 
over large finite fields', {\it Intern. Math. Res. Notices.\/}, 
{\bf  2014} (2014), 568--575.

\bibitem{B-SSW} L. Bary-Soroker, Y. Smilansky, and A. Wolf, 
`On the function field analogue of Landau's theorem on sums of squares', 
{\it Preprint\/}, 2015, \url{http://arxiv.org/1504.06809}.

\bibitem{Batir} N. Batir,
`Some new inequalities for gamma and polygamma functions', 
{\it J. Inequal. Pure Appl. Math.\/}, {\bf 6}, (2005), Article 103, 9 pp. 

\bibitem{CS}
Y. Chen and X. Sun, `On Romanoff's constant', {\it J. Number Theory\/}, 
{\bf 106} (2004), 275--284. 

\bibitem{Ent}
A. Entin, `On the Bateman-Horn conjecture for polynomials over large finite fields', 
{\it Preprint\/}, 2014, \url{http://arxiv.org/1409.0846,}.

\bibitem{HR}
L. Habsieger and X. Roblot, `On integers of the form $p+2^k$',
 {\it Acta Arith.\/}, {\bf 122} (2006), 45--50. 
 
 \bibitem{KR-G}
 J.  P. Keating and E. Roditty-Gershonk, `Arithmetic correlations over large finite fields', 
  {\it Intern. Math. Res. Notices.\/},  (to appear)
  
 \bibitem{KRR-GR}
 J.  P. Keating, B. Rodgers, E. Roditty-Gershon and Z. Rudnick, `Sums of divisor functions in $\F_q[t]$ and matrix integrals', {\it Preprint\/}, 2015, \url{http://arxiv.org/1504.07804}.
 
  \bibitem{KeRu1}
 J.  P. Keating and Z. Rudnick, `The variance of the number of prime polynomials in
short intervals and in residue classes', {\it Intern. Math. Res. Notices.\/}, 
{\bf  2014} (2014), 259--288.

 \bibitem{KeRu2}
 J.  P. Keating and Z. Rudnick, `Squarefree polynomials and M\"obius values in short intervals and arithmetic progressions', {\it Preprint\/}, 2015, \url{http://arxiv.org/1504.03444}.

\bibitem{Kuan}
Y.-L. Kuan, `A function field analogue of Romanoff's theorem',  {\it Int. J. Number Theory\/}, (to appear).
   
\bibitem{LN}
R. Lidl and H. Niederreiter, Finite fields, vol. 20 of Encyclopedia of Mathematics
and its Applications. Cambridge University Press, 1997.

\bibitem{Lu}
G. L\"u, `On Romanoff's constant and its generalized problem', 
{\it Adv. Math. (China)\/}, {\bf  36} (2007), 94--100. 

\bibitem{MRS}
R. Murty, M. Rosen and J. Silverman, Variations on a theme of Romanoff', 
{\it Internat. J. Math.\/}, {\bf 7} (1996), 373--391. 

\bibitem{Pintz}
J. Pintz, `A note on Romanov's constant', {\it Acta Math. Hungar.\/}, 
{\bf 112} (2006), 1--14. 

\bibitem{Pollack}
P. Pollack, `A polynomial analogue of the twin prime conjecture', 
{\it Proc. Amer. Math. Soc.\/}, {\bf 136} (2008), 3775--3784. 

\bibitem{Romanoff}
N. P. Romanoff, `\"Uber einige S\"atze der additiven Zahlentheorie', 
{\it  Math. Ann.\/}, {\bf 109} (1934), 668--678.

 \bibitem{Rud}  Z. Rudnick. `Some problems in analytic number 
 theory for polynomials over a finite field', 
 {\it Proc. Intern. Congress of Mahematicans\/}, Vol.~1, Seoul, 2014. 

\end{thebibliography}
\end{document}